\newtheorem{theorem}{\bf Theorem}[section]
\newtheorem{corollary}[theorem]{\bf Corollary}
\newtheorem{lemma}[theorem]{\bf Lemma}
\newtheorem{conjecture}[theorem]{\bf Conjecture}
\newcommand{\qed}{\hfill $\square$ \bigskip}
\begin{document}
%\modulolinenumbers[5]
%\linenumbers
\date{}
\title{The existence of perfect codes in a family of generalized Fibonacci cubes}

\author{
Michel Mollard\footnote{Institut Fourier, CNRS, Universit\'e Grenoble Alpes, France email: michel.mollard@univ-grenoble-alpes.fr}
}

\maketitle

\begin{abstract}
\noindent The {\em Fibonacci cube} of dimension $n$, denoted as $\Gamma_n$,  is the subgraph of the $n$-cube $Q_n$ induced by vertices with no consecutive 1's. In an article of 2016 Ashrafi and his co-authors proved the non-existence of perfect codes in  $\Gamma_n$ for $n\geq 4$. As an open problem the authors suggest to consider the existence of perfect codes in generalization of Fibonacci cubes. The most direct generalization is the family $\Gamma_n(1^s)$ of subgraphs induced by strings without $1^s$ as a substring where $s\geq 2$ is a given integer. We prove the existence of a perfect code in $\Gamma_n(1^s)$ for $n=2^p-1$ and $s \geq 3.2^{p-2}$ for any integer $p\geq 2$.
\end{abstract}

\noindent
{\bf Keywords:} Error correcting codes, perfect code, Fibonacci cube. 

\noindent
{\bf AMS Subj. Class. }: 94B5,0C69

%%%%%%%%%%%%%%%%%%%%%%%%%%%%%%%%%%%%%%%%%%%%%%%%%%%%%%%%%%
\section{Introduction and notations}
Let $G$ be a connected graph. The \emph{open neighbourhood} of a vertex $u$ is $N(u)$ the set of vertices adjacent to $u$. The \emph{closed neighbourhood} of $u$ is $N[u]=N(u)\cup\{u\}$. The \emph{distance} between two vertices noted $d_G(x,y)$, or $d(x,y)$ when the graph is unambiguous, is the length of the shortest path between $x$ and $y$. We have thus $N[u]=\{v \in V(G);d(u,v)\leq1\}$.

A \emph{dominating set} $D$ of $G$ is a set of vertices such that every vertex of $G$ belongs to the closed neighbourhood of at least one vertex of $D$. 
In \cite{Biggs}, Biggs initiated the study of perfect codes in graphs a generalization of classical 1-error perfect correcting codes. A \emph{code} $C$ in $G$ is a set of vertices $C$ such that for all pair of distinct vertices $c,c'$ of $C$ we have $N[c]\cap N[c']=\emptyset$ or equivalently such that $d_G(c,c')\geq3$.

A \emph{perfect code} of a graph $G$ is both a dominating set and a code. It is thus a set of vertices $C$  such that every vertex of $G$ belongs to the closed neighbourhood of exactly one vertex of $C$. A perfect code is some time called an efficient dominating set. The existence or non-existence of perfect codes have been considered for many graphs. See the introduction of \cite{aabfk-2016} for some references.

The vertex set of the \emph{$n$-cube} $Q_n$ is the set $\mathbb{B}_n$ of  binary strings of length $n$, two vertices being adjacent if they differ in precisely one position. Classical 1-error correcting codes and perfect codes are codes and perfect codes in the graph $Q_n$.
The concatenation of strings $\bm{x}$ and $\bm{y}$ is noted $\bm{x}||\bm{y}$ or just $\bm{x}\bm{y}$ when there is no ambiguity. A string $\bm{f}$ is a \emph{substring} of a string $\bm{s}$ if there exist strings $\bm{x}$ and $\bm{y}$, may be empty, such that $\bm{s}=\bm{x}\bm{f}\bm{y}$.

A {\em Fibonacci string} of length $n$ is a binary string $\bm{b}=b_1\ldots b_n$ with $b_i\cdot b_{i+1}=0$ for $1\leq i<n$. In other words a Fibonacci string is a binary string without $11$ as substring.
The {\em Fibonacci cube} $\Gamma_n$ ($n\geq 1$) is the subgraph of $Q_n$ induced by the Fibonacci strings of length $n$.
Fibonacci cubes were introduced as a model for interconnection networks~\cite{hsu-93} and received a lot of attention afterwards. 
These graphs also found an application in theoretical chemistry. See the survey \cite{survey} for more results and applications about Fibonacci cubes.

The sets $\{00\}$ and $\{010,101\}$ are perfect codes in respectively $\Gamma_2$ and $\Gamma_3$. In a recent paper  \cite{aabfk-2016} Ashrafi and his co-authors proved the non-existence of perfect codes in  $\Gamma_n$ for $n\geq 4$. As an open problem the authors suggest to consider the existence of perfect codes in generalization of Fibonacci cubes.
The most complete generalization  proposed in \cite{ikr} is, for a given string $\bm{f}$, to consider $\Gamma_n(\bm{f})$ the subgraph of $Q_n$ induced by strings that do not contain $\bm{f}$ as substring. Since Fibonacci cubes are $\Gamma_n(11)$ the most immediate generalization \cite{hsuliu,Zag} is to consider  $\Gamma_n({1^s})$ for a given integer $s$. We will prove the existence of perfect codes in $\Gamma_n({1^s})$ for an infinite family of parameters $(n,s)$. 

It will be convenient to consider the binary strings of length $n$ as vectors   of $\mathbb{F}^n$ the vector space of dimension $n$ over the field $F=\mathbb{Z}_2$ thus to associate to a string $x_1 x_2 \dots x_n$ the vector $\theta(x_1 x_2 \dots x_n)=(x_1,x_2,\ldots,x_n)$.
The \emph{Hamming distance} between two vectors $\bm{x},\bm{y} \in \mathbb{F}^n$, $d(\bm{x},\bm{y})$ is the number of coordinates in which they differ.
The $\emph{parity function}$ is the function from $\mathbb{F}^n$ to $\mathbb{Z}_2$ defined by $\pi(\bm{x})=\pi(x_1,x_2,\ldots,x_n)= x_1+x_2+\ldots+x_n$.
By the correspondence $\theta$ we can define the sum $\bm{x}+\bm{y}$, the Hamming distance $d(\bm{x},\bm{y})$ and the parity $\pi(\bm{x})$ of  strings in $\mathbb{B}_n$. Note that Hamming distance is the usual graph distance in $Q_n$.
The complement of a string  $\bm{x}\in \mathbb{B}_n$ is the string $\overline{\bm{x}}=\bm{x}+1^n$.

We will first recall some basic results about perfect codes in $Q_n$.
Since $Q_n$ is a regular graph of degree $n$ the existence of a perfect code of cardinality $|C|$ implies $|C|(n+1)=2^n$ thus a necessary condition of existence is that $n+1$ is a power of 2 thus that $n=2^p-1$ for some integer $p$.

For any integer $p$ Hamming \cite{Ha1950} constructed, a linear subspace of $\mathbb{F}^{2^p-1}$ which is a perfect code. It is easy to prove that all linear perfect codes are Hamming codes.\\
In 1961  Vasilev \cite{{Va1962}}, and later many authors, see \cite{Co1,Sol2008} for a survey, constructed perfect codes which are not linear codes.
Let us recall Vasilev's construction of perfect codes.

\begin{theorem}\label{thvas}\cite{Va1962}
Let $C_r$ be a perfect code of $Q_r$. Let $f$ be a function from $C_r$ to $\mathbb{Z}_2$  and $\pi$ be the $\emph{parity function}$.
Then the set $C_{2r+1}= \left\{\bm{x}||\pi(\bm{x})+f(\bm{c})||\bm{x}+\bm{c};\bm{x}\in \mathbb{B}_r,\bm{c}\in C_r\right\}$ is a perfect code of $Q_{2r+1}$
 \end{theorem}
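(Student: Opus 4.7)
My plan is to reduce perfection of $C_{2r+1}$ to the single condition that it dominates $Q_{2r+1}$. The parametrization $(\bm{x},\bm{c})\mapsto \bm{x}\|\pi(\bm{x})+f(\bm{c})\|\bm{x}+\bm{c}$ is clearly injective: the first $r$ coordinates recover $\bm{x}$, adding $\bm{x}$ to the last $r$ coordinates recovers $\bm{c}$, and the middle bit is then determined. Hence $|C_{2r+1}|=2^r|C_r|=2^{2r}/(r+1)$, so $|C_{2r+1}|(2r+2)=2^{2r+1}$, which is exactly the sphere-packing equality in $Q_{2r+1}$. Consequently, once $C_{2r+1}$ is shown to be dominating, the identity $\sum_{\bm{z}\in C_{2r+1}}|N[\bm{z}]|=2^{2r+1}$ forces each vertex of $Q_{2r+1}$ to be dominated by exactly one codeword, yielding perfection for free.

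To prove domination I would fix an arbitrary $\bm{u}=\bm{a}\|b\|\bm{d}$ with $\bm{a},\bm{d}\in\mathbb{B}_r$ and $b\in\mathbb{Z}_2$, and apply the perfect-code property of $C_r$ to $\bm{a}+\bm{d}$: there is a unique $\bm{c}\in C_r$ at Hamming distance at most $1$ from $\bm{a}+\bm{d}$. Two candidate codewords of $C_{2r+1}$ then suggest themselves: $\bm{z}_1$ coming from the pair $(\bm{x},\bm{c})=(\bm{a},\bm{c})$, which automatically matches $\bm{u}$ on the first block; and, in the subcase $\bm{a}+\bm{d}=\bm{c}+\bm{f}_i\neq\bm{c}$ where $\bm{f}_i$ is a standard basis vector, the variant $\bm{z}_2$ coming from $(\bm{a}+\bm{f}_i,\bm{c})$, which matches $\bm{u}$ on the third block because $(\bm{a}+\bm{f}_i)+\bm{c}=\bm{d}$.

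A short case analysis then finishes the argument. When $\bm{a}+\bm{d}=\bm{c}$, the codeword $\bm{z}_1$ agrees with $\bm{u}$ on both outer blocks and so is at distance at most $1$ from $\bm{u}$. In the complementary subcase, the middle bits of $\bm{z}_1$ and $\bm{z}_2$ are respectively $\pi(\bm{a})+f(\bm{c})$ and $\pi(\bm{a})+1+f(\bm{c})$; since these are complementary, exactly one of them equals $b$, and the corresponding candidate differs from $\bm{u}$ in precisely one coordinate (position $i$ of the third block for $\bm{z}_1$, or position $i$ of the first block for $\bm{z}_2$), hence lies in $N[\bm{u}]$. I expect the only real obstacle to lie in keeping this two-by-two bookkeeping straight; the algebraic fact that makes the whole argument go through is the identity $\pi(\bm{x}+\bm{f}_i)=\pi(\bm{x})+1$, which is precisely why the parity function $\pi$ is baked into the construction: it ensures that a one-coordinate perturbation of $\bm{x}$ simultaneously shifts the third block (repairing the block mismatch) and the middle bit (leaving exactly one of the two candidates aligned with $b$).
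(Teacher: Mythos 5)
Your proof is correct, but it attacks the problem from the opposite side to the paper. Both arguments start from the same counting step: the parametrization is injective, so $|C_{2r+1}|=2^r|C_r|=2^{2r+1}/(2r+2)$, which is exactly the sphere-packing equality in $Q_{2r+1}$. From there the paper verifies the \emph{packing} half --- a four-case analysis on $d_1=d(\bm{x},\bm{x'})$ showing any two distinct codewords are at Hamming distance at least $3$ --- and lets the cardinality identity supply the covering property for free. You instead verify the \emph{covering} half: for an arbitrary $\bm{u}=\bm{a}\|b\|\bm{d}$ you locate the unique $\bm{c}\in C_r$ within distance $1$ of $\bm{a}+\bm{d}$ and exhibit an explicit codeword in $N[\bm{u}]$, using the complementarity of the middle bits of the two candidates $(\bm{a},\bm{c})$ and $(\bm{a}+\bm{f}_i,\bm{c})$ via $\pi(\bm{x}+\bm{f}_i)=\pi(\bm{x})+1$; the cardinality identity then supplies disjointness of the closed neighbourhoods for free. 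Your case analysis checks out (in the subcase $\bm{a}+\bm{d}=\bm{c}+\bm{f}_i$ the third blocks of $\bm{z}_1$ and $\bm{z}_2$ are $\bm{d}+\bm{f}_i$ and $\bm{d}$ respectively, and exactly one middle bit matches $b$, so the chosen candidate differs from $\bm{u}$ in precisely one coordinate). The trade-off: your route is constructive, producing the dominating codeword explicitly and thereby explaining \emph{why} the parity term is in the construction, whereas the paper's route is the one that generalizes to minimum-distance arguments and is slightly shorter to write down; both are complete proofs.
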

We recall  also the proof of Theorem \ref{thvas} in such a way our article will be self contained. 
\begin{proof}
Fist notice that $|C_{2r+1}|=2^r|C_r|=2^r\frac{2^{r}}{r+1}=\frac{2^{2r+1}}{2r+2}$. Thus if is sufficient to prove that the distance between to different elements of $C_{2r+1}$ is at least 3.\\
Consider $d(\bm{x}||\pi(\bm{x})+f(\bm{c})||\bm{x}+\bm{c}, \bm{x'}||\pi(\bm{x'})+f(\bm{c'})||\bm{x'}+\bm{c'})=d_1+d_2+d_3$  where $d_1=d(\bm{x},\bm{x'})$, $d_2=d(\pi(\bm{x})+f(\bm{c}),\pi(\bm{x'})+f(\bm{c'}))$ and $d_3=d(\bm{x}+\bm{c},\bm{x'}+\bm{c'})$.\\
If $d_1=0$ then $\bm{x}= \bm{x'}$ thus $d_3=d(\bm{c},\bm{c'})\geq 3$.\\
If $d_1=1$ and $\bm{c}= \bm{c'}$ then $d_2=d_3=1$\\
If $d_1=1$ and $\bm{c} \neq \bm{c'}$ then $d_3\geq 2$ otherwise $d(\bm{c},\bm{c'})\leq2$\\
If $d_1=2$ then $d_3\neq 0$ otherwise $d(\bm{c},\bm{c'})=2$\\
Thus $d=d_1+d_2+d_3\geq3$.
\end {proof}
\qed                                      
 
If $f(\bm{c})=0$ for any $\bm{c}\in C_r$ we obtain the classical inductive construction of Hamming codes with $C_1=\{0\}$ as basis.

In the next section we will use this construction starting from the Hamming code in $Q_r$ as  $C_r$ and a function $f$ chosen  in such way that the strings of the constructed code $C_{2r+1}$ has not a too big number of consecutive 1's.

\section{Main Result}
\begin{lemma}
Let $m$ be an integer. Let $A_0$ be the set of strings $A_0=\{0^{m+1}\bm{y};\bm{y}\in \mathbb{B}_{m}\}$. For $i\in \{1,\dots,m\}$ let $A_i=\{\bm{z}10^{m+1}\bm{y}; \bm{z}\in \mathbb{B}_{i-1}, \bm{y}\in \mathbb{B}_{m-i}\}$.
Then the sets $A_i$ are disjoint and any string of ${B}_{2m+1}$ containing $0^{m+1}$ as substring belongs to a $A_i$. 
\end{lemma}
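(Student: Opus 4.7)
The plan is to identify each $A_i$ with the set of strings in $\mathbb{B}_{2m+1}$ whose \emph{first} occurrence of $0^{m+1}$ starts at a specific position: $A_0$ corresponds to first occurrence starting at position 1, and $A_i$ (for $1\le i\le m$) corresponds to first occurrence starting at position $i+1$. Once this is established, disjointness and covering both fall out of the definitions. So the first thing I would do is spell out precisely what it means for a string $\bm{s}\in A_i$ in terms of the coordinates $s_1,\dots,s_{2m+1}$: for $\bm{s}\in A_0$, the coordinates $s_1,\dots,s_{m+1}$ are all $0$; for $\bm{s}\in A_i$ with $i\ge 1$, we have $s_i=1$ and $s_{i+1}=\cdots=s_{i+m+1}=0$.

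For disjointness, I would check the three cases. If $i=0$ and $j\ge 1$, then any $\bm{s}\in A_0$ has $s_j=0$ (since $j\le m<m+1$), while any $\bm{s}\in A_j$ has $s_j=1$, so $A_0\cap A_j=\emptyset$. If $1\le i<j\le m$, then $j$ lies in the range $\{i+1,\dots,i+m+1\}$ (because $j\le m\le i+m$), so a string in $A_i$ has $s_j=0$ while a string in $A_j$ has $s_j=1$. Thus the $A_i$ are pairwise disjoint.

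For the covering statement, I would take an arbitrary $\bm{s}\in\mathbb{B}_{2m+1}$ that contains $0^{m+1}$ as a substring and let $k$ be the smallest index such that $s_k s_{k+1}\cdots s_{k+m}=0^{m+1}$; note $1\le k\le m+1$. If $k=1$ then by definition $\bm{s}\in A_0$ with $\bm{y}=s_{m+2}\cdots s_{2m+1}$. Otherwise $k\ge 2$, and by minimality of $k$ the substring $s_{k-1} s_k\cdots s_{k+m-1}$ is not $0^{m+1}$; since the last $m$ symbols are already $0$, we must have $s_{k-1}=1$. Setting $i=k-1\in\{1,\dots,m\}$ and taking $\bm{z}=s_1\cdots s_{i-1}$ and $\bm{y}=s_{i+m+2}\cdots s_{2m+1}$ exhibits $\bm{s}\in A_i$.

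I do not foresee a genuine obstacle here; the only care needed is verifying that the index $i=k-1$ indeed lies in $\{1,\dots,m\}$ (so that the case analysis exhausts all possibilities) and that the lengths of $\bm{z}$ and $\bm{y}$ match the definition of $A_i$, which is a direct arithmetic check: $(i-1)+1+(m+1)+(m-i)=2m+1$.
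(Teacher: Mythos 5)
Your proposal is correct and follows essentially the same route as the paper: both identify $A_i$ with the strings whose first occurrence of $0^{m+1}$ starts at position $i+1$, use the forced $1$ immediately before that first occurrence to get membership in some $A_i$, and use the position of that $1$ to derive disjointness. The only difference is presentational (you split the disjointness check into two cases where the paper handles it in one line).
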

\begin{proof}
Let $\bm{x}$ be a string of ${B}_{2m+1}$ containing $0^{m+1}$ as substring and $i$ be the minimum integer such that $x_{i+1}x_{i+2}\dots x_{i+m+1}=0^{m+1}$. Then $i=0$, and $\bm{x}$ belongs to $A_0$, or $m \geq i\geq 1$. In this case  $x_{i}=1$ thus $\bm{x}\in A_i$.
 Assume $\bm{x}\in A_i\cap A_j$ with $m\geq j>i\geq 0$ then $x_{j}=1$ thus $j\geq i+m+2>m$ a contradiction. 
\end{proof}
\begin{theorem}\label{thmain}
Let $n=2^p-1$ where $p\geq 2$ and let $s =3.2^{p-2}$. There exists a perfect code $C$ in $Q_n$ such that no elements of $C$ contains $1^s$ as substring.
 \end{theorem}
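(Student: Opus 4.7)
The plan is to apply Vasilev's construction (Theorem~\ref{thvas}) exactly once, with $r = 2^{p-1}-1$ so that $2r+1 = n$, starting from a perfect code $C_r$ in $Q_r$ (for instance the Hamming code). Everything reduces to choosing the function $f\colon C_r \to \mathbb{Z}_2$ so that no element of the resulting Vasilev code $C_n$ contains $1^s$ as a substring, where $s = 3\cdot 2^{p-2}$.

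First I would analyze when an element $\bm{x}||\pi(\bm{x})+f(\bm{c})||\bm{y}$ of $C_n$, with $\bm{y} = \bm{x}+\bm{c}$, $\bm{x}\in\mathbb{B}_r$ and $\bm{c}\in C_r$, can contain a run of $s$ consecutive 1's. Since $s > r$ such a run cannot lie entirely in $\bm{x}$ or entirely in $\bm{y}$, and an elementary count (the run must meet the constraints $1\le i$ and $i+s-1\le 2r+1$) shows it must start at some position $i\in\{1,\ldots,2^{p-2}\}$ and consist of a suffix of $\bm{x}$ of length $a = r-i+1$, the middle bit, and a prefix of $\bm{y}$ of length $b = i + 2^{p-2} - 1$, with $a+b+1 = s$. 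Writing $y_k = x_k + c_k$, the prefix condition forces $x_k = 1+c_k$ for $k\le b$, and the suffix condition forces $x_k = 1$ for $k\ge i$; these two blocks overlap in exactly the $2^{p-2}$ positions $\{i,i+1,\ldots,i+2^{p-2}-1\}$, and consistency on the overlap is equivalent to $c_i = c_{i+1} = \cdots = c_{i+2^{p-2}-1} = 0$. Call such an $i$ a \emph{candidate} for $\bm{c}$. When $i$ is a candidate, $\bm{x}$ is completely determined by $(\bm{c},i)$, and a parity count using that $r$ is odd gives $\pi(\bm{x}) \equiv 1 + \pi(c_1\cdots c_{i-1}) \pmod 2$; hence the element contains $1^s$ precisely when $f(\bm{c}) = \pi(c_1\cdots c_{i-1})$, a value I shall call the \emph{forbidden} value of $f(\bm{c})$ attached to the candidate $i$.

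The decisive step is that all candidates for a fixed $\bm{c}$ share the same forbidden value. If $i < i'$ are two candidates, then $i' - i \le 2^{p-2} - 1$ (both belong to $\{1,\ldots,2^{p-2}\}$), so the bits $c_i,\ldots,c_{i'-1}$ lie inside the zero-run $c_i,\ldots,c_{i+2^{p-2}-1}$ guaranteed by $i$; therefore $c_i + \cdots + c_{i'-1} = 0$ and $\pi(c_1\cdots c_{i-1}) = \pi(c_1\cdots c_{i'-1})$. Consequently each $\bm{c}\in C_r$ has at most one forbidden value of $f(\bm{c})$, and defining $f(\bm{c})$ to be the other bit of $\mathbb{Z}_2$ (arbitrary when $\bm{c}$ has no candidate) produces, via Theorem~\ref{thvas}, a perfect code $C_n$ none of whose elements contains $1^s$. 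I expect the main obstacle to be the index bookkeeping in the second step; once the parametrization of possible $1^s$-runs by the single parameter $i$ is in place, the overlap argument for the parities takes only a couple of lines, and the preceding Lemma (applied with $m = 2^{p-2}-1$) is what makes the enumeration of candidates transparent.
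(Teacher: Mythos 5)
Your proposal is correct and is essentially the paper's proof: both apply Vasilev's construction once with $r=2^{p-1}-1$ and choose $f(\bm{c})$ according to the parity of the prefix of $\bm{c}$ preceding its (first) zero-run of length $2^{p-2}$, so that the middle parity bit destroys any potential run of $s$ ones; unwinding your indices, your $f$ coincides exactly with the one the paper defines by cases via its Lemma. The only difference is expository — you derive the constraints on $f$ and check their consistency before defining it, whereas the paper defines $f$ first and then argues by contradiction on the minimal starting position of a $1^s$-run.
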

\begin{proof}
Let $m=2^{p-2}-1$ thus $2m+1=2^{p-1}-1$ and $s=3m+3$. Let $C_{2m+1}$ be a perfect code in $Q_{2m+1}$. Let $f$ be the function from  $\mathbb{B}_{2m+1}$ to $\mathbb{Z}_2$ defined by
\begin{itemize} 
\item $f(0^{m+1}\bm{y})=1$ for $\bm{y}\in \mathbb{B}_{m}$
\item $f(10^{m+1}\bm{y})=0$ for  $\bm{y}\in \mathbb{B}_{m-1}$
\item $f(\bm{z}10^{m+1}\bm{y})=\pi(\bm{z})$ for $\bm{z}\in \mathbb{B}_{i-1}$ and $\bm{y}\in \mathbb{B}_{m-i}$ for $i=2$ to $m$.
\item $f=0$ otherwise.
\end{itemize}
Note that from the previous lemma the function is well defined.
Let $C$ be the perfect code obtained from Vasilev's construction from $C_{2m+1}$ and $f$.
Assume there exists a string $\bm{d}$ in $C$ with $1^{3m+3}$ as substring. Therefore $\bm{d}$ is obtained from $\bm{x}=d_1d_2\dots d_{2m+1}$ and $\bm{c}\in C_{2m+1}$. Since $n= 4m+3$ note first that $d_{m+1}d_{m+2}\dots d_{3m+3}=1^{2m+2}$.
Let $i$ be the minimum integer such that $d_{i}d_{i+1}\dots d_{3m+i+2}=1^{3m+3}$. We consider 3 cases
\begin{itemize}
\item $i=1$ then $x=d_1d_2\dots d_{2m+1}=1^{2m+1}$ and $d_{2m+2}d_{2m+3}\dots d_{3m+3}=1^{m+2}$. Since  $\bm{c}+\bm{x}=1^{m+1}d_{3m+4}d_{3m+5}\dots d_{4m+3}$ we have $\bm{c}=0^{m+1}\bm{y} $ for some $\bm{y}\in \mathbb{B}_{m}$. Thus $f(\bm{c})=1$ and since  $\pi(\bm{x})=1$ we obtain  $d_{2m+2}= f(\bm{c})+\pi(\bm{x})=0$ a contradiction.

\item $i=2$ then $\bm{x}=01^{2m}$ and $d_{2m+2}d_{2m+3}\dots d_{3m+4}=1^{m+3}$. Since  $\bm{c}+\bm{x}=1^{m+2}d_{3m+5}d_{3m+6}\dots d_{4m+3}$ we have $\bm{c}=10^{m+1}\bm{y} $ for some $\bm{y}\in \mathbb{B}_{m-1}$. Thus $f(\bm{c})=0$ and since  $\pi(\bm{x})=0$ we obtain  $d_{2m+2}=f(\bm{c})+\pi(\bm{x})=0$ a contradiction.

\item $i\geq3$ then $\bm{x}=\bm{z}01^{2m-i+2}$ for $\bm{z}\in \mathbb{B}_{i-2}$ and $d_{2m+2}d_{2m+3}\dots d_{3m+2+i}=1^{m+i+1}$. Since $\bm{c}+\bm{x}=1^{m+i}d_{3m+i+3}d_{3m+i+4}\dots d_{4m+3}$ we have $\bm{c}=\overline{\bm{z}}10^{m+1}\bm{y} $ for some $\bm{y}\in \mathbb{B}_{m-i+1}$. Thus $f(\bm{c})=\pi(\overline{\bm{z}})$. Since  $\pi(\bm{x})=\pi(\bm{{z}})+\pi({1^{2m-i+2}})$ and $\pi(\overline{\bm{z}})+\pi(\bm{z})=\pi({1^{i-2}})$ we obtain  $d_{2m+2}= f(\bm{c})+\pi(\bm{x})=\pi({1^{2m}})=0$ a contradiction.
\end{itemize}
Therefore there exists no string $d$ in $C$ with $1^{3m+3}$ as substring.
\end{proof}
\qed  
 \begin{corollary}\label{cormain}
Let $n=2^p-1$ where $p\geq 2$ and let $s \geq 3.2^{p-2}$. There exists a perfect code in $\Gamma_n(1^s)$.
\end{corollary}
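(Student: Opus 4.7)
The plan is to reuse the perfect code produced by Theorem \ref{thmain} directly, with no further construction. First I would apply Theorem \ref{thmain} with the given $p$ to obtain a perfect code $C$ in $Q_n$ whose codewords contain no occurrence of $1^{3\cdot 2^{p-2}}$. Since $s\geq 3\cdot 2^{p-2}$, any substring equal to $1^s$ contains $1^{3\cdot 2^{p-2}}$, so no codeword of $C$ contains $1^s$; hence $C\subseteq V(\Gamma_n(1^s))$.

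The remaining task is to check that $C$ remains a perfect code after restricting to the induced subgraph $\Gamma_n(1^s)$. For the code condition, distinct $c,c'\in C$ satisfy $d_{Q_n}(c,c')\geq 3$, and since $\Gamma_n(1^s)$ is an induced subgraph of $Q_n$ we have $d_{\Gamma_n(1^s)}(c,c')\geq d_{Q_n}(c,c')\geq 3$. For the domination condition, take any $v\in V(\Gamma_n(1^s))$. Because $C$ is perfect in $Q_n$, there is a unique $c\in C$ with $d_{Q_n}(v,c)\leq 1$; both $v$ and $c$ belong to $V(\Gamma_n(1^s))$, so if they are adjacent in $Q_n$ they are adjacent in the induced subgraph as well, giving $v\in N_{\Gamma_n(1^s)}[c]$. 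Uniqueness of $c$ transfers from $Q_n$ to the induced subgraph for the same reason. There is essentially no obstacle here: the corollary is a monotone consequence of Theorem \ref{thmain}, since enlarging $s$ only removes vertices of the ambient graph while leaving $C$ intact and well-separated.
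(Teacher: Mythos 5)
Your proof is correct and follows essentially the same route as the paper: take the code $C$ from Theorem \ref{thmain}, observe that its codewords avoid $1^s$ and hence lie in $\Gamma_n(1^s)$, then transfer both the domination property (adjacency in $Q_n$ between vertices of an induced subgraph persists in that subgraph) and the separation property (subgraph distances only increase) from $Q_n$. No issues.
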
       
\begin{proof}
Indeed let $C$ be a perfect code in $Q_n$ such that no element of $C$ contains $1^{3.2^{p-2}}$ as substring. 
The strings of $C$ are in $V(\Gamma_n(1^s))$. Let $x$ be a vertex of $V(\Gamma_n(1^s))$. If $x\notin C$ then $x$ is adjacent in $Q_n$ to a vertex $c$ in $C$. Note that $x$ and $c$ are also adjacent in $\Gamma_n(1^s)$ thus $C$ is a dominating set of $\Gamma_n(1^s)$. If $c$ and $c'$ are two strings of $C$ then $d_{\Gamma_n(1^s)}(c,c')\geq d_{Q_n}(c,c')\geq 3$. Therefore $C$ is a perfect code in  $\Gamma_n(1^s)$.
\end{proof}
\section{Concluding remark and open problems}
Whenever $n=2^p-1$ it will be interesting to determine  
the minimum $s$ such that there exists a perfect code in $\Gamma_n(1^s)$.

Corollary \ref{cormain} is not always the best result possible. For example for $n=7$ the code $C_7$ obtained in  Vasilev's construction starting from $C_3=\{000,111\}$ with $f(000)=f(111)=1$ is a perfect code in $\Gamma_n(1^5)$. Indeed
\begin{itemize}
\item $11111ab$ or $0011111$ cannot be in $C_7$ since the  $P(111)+1=P(001)+1=0$
\item $011111a$ cannot be in $C_7$ since the possible codewords begining with $011$ are $0111011$ and $0111100$.
\end{itemize}
Note that that all strings of this code are obtained from strings in the Hamming code of length 7 by a translation of $0001000$. This simple idea can be generalized but is less efficient than our result in the general case.

We propose also the following conjecture:
\begin{conjecture}
For $n\geq 3$ and $s\geq 1$ if  $C$ is a perfect code in $\Gamma_n(1^s)$ then $n=2^p-1$ for some integer $p$ and furthermore $C$ is a perfect code in $Q_n$.
\end{conjecture}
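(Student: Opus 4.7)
The plan is to show that any perfect code $C$ in $\Gamma_n(1^s)$ satisfies both defining properties of a perfect code in $Q_n$: (i) distinct codewords are at $Q_n$-distance at least $3$, and (ii) every vertex of $Q_n$ is within $Q_n$-distance $1$ of some codeword. Together these force $|C|(n+1)=2^n$, which yields $n=2^p-1$ by the standard sphere-packing argument recalled in the introduction.

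For (i), I would argue by contradiction. Suppose $c,c'\in C$ are distinct with $d_{Q_n}(c,c')\le 2$. The case $d=1$ is immediate: since $\Gamma_n(1^s)$ is an induced subgraph of $Q_n$, an edge $cc'$ would survive there, contradicting $d_{\Gamma_n(1^s)}(c,c')\ge 3$. For $d=2$, let $i,j$ be the two differing coordinates, and let $u,v$ be obtained from $c$ by flipping coordinate $i$ or coordinate $j$ respectively. Both must lie outside $V(\Gamma_n(1^s))$, since otherwise a length-$2$ path through one of them would give $d_{\Gamma_n(1^s)}(c,c')\le 2$. So both contain $1^s$. Since flipping a $1$ into a $0$ never creates a new $1^s$ substring, we must have $c_i=c_j=0$. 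But then $c'$ is obtained from $u$ by flipping the $0$ at coordinate $j$ into a $1$; since $u_j=c_j=0$, coordinate $j$ lies outside the $1^s$-run inside $u$, and that run therefore persists in $c'$, contradicting $c'\in V(\Gamma_n(1^s))$.

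For (ii), (i) makes the closed $Q_n$-neighborhoods of codewords pairwise disjoint. Writing $b(c)$ for the number of coordinates of $c$ whose flip leaves $V(\Gamma_n(1^s))$, the perfect-code identity in $\Gamma_n(1^s)$ reads $|V(\Gamma_n(1^s))|=\sum_{c\in C}(n+1-b(c))$, hence $|C|(n+1)=|V(\Gamma_n(1^s))|+\sum_c b(c)$. So the target identity $|C|(n+1)=2^n$ is equivalent to the claim that every string $x\in\mathbb{B}_n$ containing $1^s$ is $Q_n$-adjacent to some codeword. Fix such an $x$ and let $\mathcal{Y}(x)$ be its $Q_n$-neighbors lying in $V(\Gamma_n(1^s))$. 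Each $y\in\mathcal{Y}(x)$ is dominated in $\Gamma_n(1^s)$ by a unique $c_y\in C$; if $c_y=y$ for some $y$, then $c_y$ is $Q_n$-adjacent to $x$ and we are done. Otherwise each $c_y$ differs from $x$ in exactly two coordinates---the coordinate distinguishing $y$ from $x$, together with one additional coordinate---and a short calculation shows that any one such codeword can cover at most two elements of $\mathcal{Y}(x)$ (namely $x+e_i$ and $x+e_j$ when $c_y=x+e_i+e_j$), while by (i) distinct such codewords must flip disjoint coordinate pairs.

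The main obstacle is turning this configuration into a contradiction. Each $c_y$ must itself avoid $1^s$, which constrains which second coordinate can be flipped, and the pairwise-disjoint pairs must jointly cover all of $\mathcal{Y}(x)$. My plan is to analyse these constraints locally near the leftmost maximal $1$-run of length at least $s$ in $x$, and to argue that no assignment $y\mapsto c_y$ satisfies them simultaneously. I do not see an elementary completion of this step; that the statement appears here only as a conjecture suggests that a more global argument---for instance comparing generating-function invariants of $V(\Gamma_n(1^s))$ with those of its $Q_n$-boundary, or exploiting an involution on the two-bit flips available at each undominated $x$---may ultimately be required.
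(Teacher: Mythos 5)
This statement is posed in the paper as a conjecture; no proof is given there, and your own text concedes that your argument is incomplete, so it cannot be accepted as a proof. That said, your part (i) is correct and is genuine partial progress: if $c,c'$ are distinct codewords with $d_{Q_n}(c,c')=2$, differing in coordinates $i,j$, then both intermediate vertices $c+e_i$ and $c+e_j$ must contain $1^s$ (else a two-step path survives in the induced subgraph), forcing $c_i=c_j=0$ since flipping a $1$ to a $0$ cannot create a run $1^s$; and then the run $1^s$ present in $c+e_i$ avoids coordinate $j$ and persists in $c'=c+e_i+e_j$, contradicting $c'\in V(\Gamma_n(1^s))$. So every perfect code in $\Gamma_n(1^s)$ automatically has minimum Hamming distance $3$ in $Q_n$. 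This is a clean lemma worth recording, and your reduction of the remaining task to the statement that every string of $\mathbb{B}_n$ containing $1^s$ is $Q_n$-adjacent to a codeword (via the counting identity $|C|(n+1)=|V(\Gamma_n(1^s))|+\sum_{c}b(c)$) is also sound.

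The gap is exactly where you say it is, and it is the entire content of the conjecture: nothing in your setup rules out a perfect code of $\Gamma_n(1^s)$ whose $Q_n$-balls fail to cover some string containing $1^s$. Your local analysis near such an $x$ only establishes that the codewords dominating the neighbours $\mathcal{Y}(x)$ would have to sit at the ends of pairwise disjoint two-bit flips of $x$, each flip turning one $1$ of a long run into a $0$ and one $0$ elsewhere into a $1$ without creating a new run $1^s$; no contradiction is derived, and it is not evident that one exists for all $n$ and $s$ (note the conjecture as stated even includes $s=1$, where $\Gamma_n(1^s)$ degenerates to a single vertex, and large $s$, where $\Gamma_n(1^s)=Q_n$ and the claim reduces to the classical sphere-packing condition). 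Until the covering claim for strings containing $1^s$ is established, the conclusion $|C|(n+1)=2^n$, and hence $n=2^p-1$, does not follow. The statement should therefore remain a conjecture, possibly supplemented by your distance-$3$ lemma as a remark.
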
.

\end{document}